\documentclass[amstex,12pt,russian,amssymb]{article}

\usepackage{mathtext}
\usepackage[cp1251]{inputenc}
\usepackage[T2A]{fontenc}
\usepackage[russian]{babel}
\usepackage[dvips]{graphicx}
\usepackage{amsmath}
\usepackage{amssymb}
\usepackage{amsxtra}
\usepackage{latexsym}
\usepackage{ifthen}

\textheight245mm \textwidth165mm
\parindent5mm
\parskip0mm

\voffset-27.3mm \hoffset-11.5mm \pagestyle{myheadings}

\begin{document}

\newcounter{lemma}
\newcommand{\lemma}{\par \refstepcounter{lemma}%
{\bf Лемма \arabic{lemma}.}}

\newcounter{corollary}
\newcommand{\corollary}{\par \refstepcounter{corollary}%
{\bf Следствие \arabic{corollary}.}}

\newcounter{remark}
\newcommand{\remark}{\par \refstepcounter{remark}%
{\bf Замечание \arabic{remark}.}}

\newcounter{theorem}
\newcommand{\theorem}{\par \refstepcounter{theorem}%
{\bf Теорема \arabic{theorem}.}}

\newcounter{proposition}
\newcommand{\proposition}{\par \refstepcounter{proposition}%
{\bf Предложение \arabic{proposition}.}}

\newcommand{\proof}{{\it Доказательство.\,\,}}
\renewcommand{\refname}{\centerline{\bf Список литературы}}

{\bf Е.А.~Севостьянов} (Житомирский государственный университет им.\
И.~Франко)

\medskip
{\bf Є.О.~Севостьянов} (Житомирський державний університет ім.\
І.~Франко)

\medskip
{\bf E.A.~Sevost'yanov} (Zhitomir Ivan Franko State University)

\medskip
{\bf О равностепенной непрерывности классов Орлича--Соболева в
замыкании области}

{\bf Про одностайну неперервність класів Орліча--Соболєва в
замиканні області}

{\bf On equicontinuity of Orlicz--Sobolev class in s closure of a
domain}

\medskip
Изучается поведение гомеоморфизмов классов Орлича--Соболева в
замыкании заданной области. В терминах простых концов регулярных
областей получены теоремы о равностепенной непрерывности указанных
классов. В частности, доказано, что в областях, границы которых
удовлетворяют определённым ограничениям, указанные классы
равностепенно непрерывны, как только их внутренняя дилатация порядка
$p$ имеет мажоранту конечного среднего колебания в каждой точке.

\medskip
Вивчається поведінка гомеоморфізмів класів Орліча--Соболєва в
замиканні заданої області. В термінах простих кінців регулярних
областей отримано теореми про одностайну неперервність вказаних
класів. Зокрема, доведено, що в областях, межі яких задовольняють
визначні обмеження, зазначені класи є одностайно неперервними, як
тільки їх внутряшня в кожній точці.

\medskip
A behavior of homeomorphisms of Orlicz--Sobolev classes in a closure
of a domain is investigated. There are obtained theorems about
equicontinuity of classes mentioned above in terms of prime ends of
regular domains. In particular, it is proved that above classes are
equicontinuous in domains with some restrictions on it's boundaries
provided that the corresponding inner dilatation of order $p$ has a
majorant of finite mean oscillation at every point.

\newpage
{\bf 1. Введение.} Настоящая заметка посвящена изучению локальных
свойств некоторых классов отображений, тесно связанных с классами
Соболева и Орлича--Соболева. Ниже будет показано, что семейство
отображений, аналогичные тем, что рассматриваются в недавней статье
\cite{KR}, являются равностепенно непрерывными в замыкании исходной
области. Подчеркнём, что в первой части статьи речь идёт об
отображениях, удовлетворяющих некоторым ограничениям на внутреннюю
дилатацию порядка $p,$ $p\in (n-1, n].$

Следующие определения могут быть найдены в работе \cite{KR}. Пусть
$\omega$ -- открытое множество в ${\Bbb R}^k$, $k=1,\ldots,n-1$.
Непрерывное отображение $\sigma:\omega\rightarrow{\Bbb R}^n$
называется {\it $k$-мерной поверхностью} в ${\Bbb R}^n$. {\it
Поверхностью} будет называться произвольная $(n-1)$-мерная
поверхность $\sigma$ в ${\Bbb R}^n.$ Поверхность
$\sigma:\omega\rightarrow D$ называется {\it жордановой
поверхностью} в $D$, если $\sigma(z_1)\ne\sigma(z_2)$ при $z_1\ne
z_2$. Далее мы иногда будем использовать $\sigma$ для обозначения
всего образа $\sigma(\omega)\subset {\Bbb R}^n$ при отображении
$\sigma$, $\overline{\sigma}$ вместо $\overline{\sigma(\omega)}$ в
${\Bbb R}^n$ и $\partial\sigma$ вместо
$\overline{\sigma(\omega)}\setminus\sigma(\omega)$. Жорданова
поверхность $\sigma$ в $D$ называется {\it разрезом} области $D$,
если $\sigma$ разделяет $D$, т.\,е. $D\setminus \sigma$ имеет больше
одной компоненты, $\partial\sigma\cap D=\varnothing$ и
$\partial\sigma\cap\partial D\ne\varnothing$.

Последовательность $\sigma_1,\sigma_2,\ldots,\sigma_m,\ldots$
разрезов области $D$ называется {\it цепью}, если:

\medskip

(i) $\overline{\sigma_i}\cap\overline{\sigma_j}=\varnothing$ для
всех $i\ne j$, $i,j= 1,2,\ldots$;

\medskip

(ii) $\sigma_{m-1}$ и $\sigma_{m+1}$ содержатся в различных
компонентах $D\setminus \sigma_m$ для всех $m>1$;

\medskip

(iii) $\cap\,d_m=\varnothing$, где $d_m$ -- компонента $D\setminus
\sigma_m$, содержащая $\sigma_{m+1}$.

\medskip
Согласно определению, цепь разрезов $\{\sigma_m\}$ определяет цепь
областей $d_m\subset D$, таких, что $\partial\,d_m\cap
D\subset\sigma_m$ и $d_1\supset d_2\supset\ldots\supset
d_m\supset\ldots$. Две цепи разрезов $\{\sigma_m\}$ и
$\{\sigma_k^{\,\prime}\}$ называются {\it эквивалентными}, если для
каждого $m=1,2,\ldots$ область $d_m$ содержит все области
$d_k^{\,\prime}$ за исключением конечного числа и для каждого
$k=1,2,\ldots$ область $d_k^{\,\prime}$ также содержит все области
$d_m$ за исключением конечного числа. {\it Конец} области $D$ -- это
класс эквивалентных цепей разрезов $D$.

Пусть $K$ -- конец области $D$ в ${\Bbb R}^n$, $\{\sigma_m\}$ и
$\{\sigma_m^{\,\prime}\}$ -- две цепи в $K$, $d_m$ и
$d_m^{\,\prime}$ -- области, соответствующие $\sigma_m$ и
$\sigma_m^{\,\prime}$. Тогда
$$\bigcap\limits_{m=1}\limits^{\infty}\overline{d_m}\subset
\bigcap\limits_{m=1}\limits^{\infty}\overline{d_m^{\,\prime}}\subset
\bigcap\limits_{m=1}\limits^{\infty}\overline{d_m}\ ,$$ и, таким
образом,
$$\bigcap\limits_{m=1}\limits^{\infty}\overline{d_m}=
\bigcap\limits_{m=1}\limits^{\infty}\overline{d_m^{\,\prime}}\ ,$$
т.\,е. множество
$$I(K)=\bigcap\limits_{m=1}\limits^{\infty}\overline{d_m}$$ зависит
только от $K$ и не зависит от выбора цепи разрезов $\{\sigma_m\}$.
Множество $I(K)$ называется {\it телом конца} $K$.

Число прообразов
$N(y, S)={\rm card}\,S^{-1}(y)={\rm card}\,\{x\in\omega:S(x)=y\},\
y\in{\Bbb R}^n$ будем называть {\it функцией кратности} поверхности
$S.$ Другими словами, $N(y, S)$ -- кратность накрытия точки $y$
поверхностью $S.$ Пусть $\rho:{\Bbb R}^n\rightarrow\overline{{\Bbb
R}^+}$ -- борелевская функция, в таком случае интеграл от функции
$\rho$ по поверхности $S$ определяется равенством:  $\int\limits_S
\rho\,d{\mathcal{A}}:=\int\limits_{{\Bbb R}^n}\rho(y)\,N(y,
S)\,d{\mathcal H}^ky.$
Пусть $\Gamma$ -- семейство $k$-мерных поверхностей $S.$ Борелевскую
функцию $\rho:{\Bbb R}^n\rightarrow\overline{{\Bbb R}^+}$ будем
называть {\it допустимой} для семейства $\Gamma,$ сокр. $\rho\in{\rm
adm}\,\Gamma,$ если
\begin{equation}\label{eq8.2.6}\int\limits_S\rho^k\,d{\mathcal{A}}\geqslant 1\end{equation}
для каждой поверхности $S\in\Gamma.$ Пусть $p\geqslant 1,$ тогда
{\it $p$-модулем} семейства $\Gamma$ назовём величину
$$M_p(\Gamma)=\inf\limits_{\rho\in{\rm adm}\,\Gamma}
\int\limits_{{\Bbb R}^n}\rho^p(x)\,dm(x)\,.$$ Полагаем также
$M(\Gamma):=M_n(\Gamma).$ Далее, как обычно, для множеств $A$, $B$ и
$C$ в ${\Bbb R}^n$, $\Gamma(A,B,C)$ обозначает семейство всех
кривых, соединяющих $A$ и $B$ в $C$.

Следуя \cite{Na}, будем говорить, что конец $K$ является {\it
простым концом}, если $K$ содержит цепь разрезов $\{\sigma_m\}$,
такую, что
$$\lim\limits_{m\rightarrow\infty} M(\Gamma(C, \sigma_m, D))=0
$$
для некоторого континуума $C$ в $D$, где $M$ -- модуль семейства
$\Gamma(C, \sigma_m, D).$

\medskip
Будем говорить, что граница области $D$ в ${\Bbb R}^n$
является {\it локально квазиконформной}, если каждая точка
$x_0\in\partial D$ имеет окрестность $U$, которая может быть
отображена квазиконформным отображением $\varphi$ на единичный шар
${\Bbb B}^n\subset{\Bbb R}^n$ так, что $\varphi(\partial D\cap U)$
является пересечением ${\Bbb B}^n$ с координатной гиперплоскостью.
Говорим, что ограниченная область $D$ в ${\Bbb R}^n$ {\it
регулярна}, если $D$ может быть квазиконформно отображена на область
с локально квазиконформной границей.

\medskip
Как следует из теоремы 4.1 в \cite{Na}, при квазиконформных
отображениях $g$ области $D_0$ с локально квазиконформной границей
на область $D$ в ${\Bbb R}^n$, $n\geqslant2$, существует
естественное взаимно однозначное соответствие между точками
$\partial D_0$ и простыми концами области $D$ и, кроме того,
предельные множества $C(g,b)$, $b\in\partial D_0$, совпадают с телом
$I(P)$ соответствующих простых концов $P$ в $D$.

Если $\overline{D}_P$ является пополнением регулярной области $D$ ее
простыми концами и $g_0$ является квазиконформным отображением
области $D_0$ с локально квазиконформной границей на $D$, то оно
естественным образом определяет в $\overline{D}_p$ метрику
$\rho_0(p_1,p_2)=\left|{\widetilde
{g_0}}^{-1}(p_1)-{\widetilde{g_0}}^{-1}(p_2)\right|$, где
${\widetilde {g_0}}$ продолжение $g_0$ в $\overline {D_0}$,
упомянутое выше.

Если $g_*$ является другим квазиконформным отображением некоторой
области $D_*$ с локально квазиконформной границей на область $D$, то
соответствующая метрика
$\rho_*(p_1,p_2)=\left|{\widetilde{g_*}}^{-1}(p_1)-{\widetilde{g_*}}^{-1}(p_2)\right|$
порождает ту же самую сходимость и, следовательно, ту же самую
топологию в $\overline{D}_P$ как и метрика $\rho_0$, поскольку
$g_0\circ g_*^{-1}$ является квазиконформным отображением между
областями $D_*$ и $D_0$, которое по теореме 4.1 из \cite{Na}
продолжается до гомеоморфизма между $\overline{D_*}$ и
$\overline{D_0}$.

В дальнейшем, будем называть данную топологию в пространстве
$\overline{D}_P$ {\it топологией простых концов} и понимать
непрерывность отображений
$F:\overline{D}_P\rightarrow\overline{D^{\,\prime}}_P$ как раз
относительно этой топологии.

Пусть $\varphi:[0,\infty)\rightarrow[0,\infty)$ -- неубывающая
функция, $f$ -- локально интегрируемая вектор-функция $n$
вещественных переменных $x_1,\ldots,x_n,$ $f=(f_1,\ldots,f_n),$
$f_i\in W_{loc}^{1,1},$ $i=1,\ldots,n.$ Будем говорить, что
$f:D\rightarrow {\Bbb R}^n$ принадлежит классу
$W^{1,\varphi}_{loc},$ пишем $f\in W^{1,\varphi}_{loc},$ если
$\int\limits_{G}\varphi\left(|\nabla f(x)|\right)\,dm(x)<\infty$ для
любой компактной подобласти $G\subset D,$ где $|\nabla
f(x)|=\sqrt{\sum\limits_{i=1}^n\sum\limits_{j=1}^n\left(\frac{\partial
f_i}{\partial x_j}\right)^2}.$ Класс $W^{1,\varphi}_{loc}$
называется классом {\it Орлича--Соболева}.

\medskip
Для отображений класса $W_{loc}^{1,1}$ и произвольного $p\geqslant
1$ корректно определена так называемая {\it внутренняя дилатация
$K_{I, p}(x,f)$ отображения $f$ порядка $p$ в точке $x$},
определяемая равенствами
\begin{equation}\label{eq0.1.1A}
K_{I, p}(x,f)\quad =\quad\left\{
\begin{array}{rr}
\frac{|J(x,f)|}{{l\left(f^{\,\prime}(x)\right)}^p}, & J(x,f)\ne 0,\\
1,  &  f^{\,\prime}(x)=0, \\
\infty, & \text{в\,\,остальных\,\,случаях}
\end{array}
\right.\,.
\end{equation}

Пусть $(X,d)$ и $\left(X^{\,{\prime}},{d}^{\,{\prime}}\right)$ ~---
метрические пространства с расстояниями $d$  и ${d}^{\,{\prime}},$
соответственно. Семейство $\frak{F}$ отображений $f:X\rightarrow
{X}^{\,\prime}$ называется {\it равностепенно непрерывным в точке}
$x_0 \in X,$ если для любого $\varepsilon > 0$ найдётся $\delta
> 0,$ такое, что ${d}^{\,\prime} \left(f(x),f(x_0)\right)<\varepsilon$ для
всех $f \in \frak{F}$ и  для всех $x\in X$ таких, что
$d(x,x_0)<\delta.$ Говорят, что $\frak{F}$ {\it равностепенно
непрерывно}, если $\frak{F}$ равностепенно непрерывно в каждой точке
из $x_0\in X.$ Всюду далее, если не оговорено противное, $d$ -- одна
из метрик в пространстве простых концов относительно области $D,$
упомянутых выше, а $d^{\,\prime}$ -- евклидова метрика. Следуя
\cite[раздел 7.22]{He} будем говорить, что борелева функция
$\rho\colon  X\rightarrow [0, \infty]$ является {\it верхним
градиентом} функции $u\colon X\rightarrow {\Bbb R},$ если для всех
спрямляемых кривых $\gamma,$ соединяющих точки $x$ и $y\in X$
выполняется неравенство $|u(x)-u(y)|\leqslant
\int\limits_{\gamma}\rho\,|dx|.$ Будем также говорить, что в
указанном пространстве $X$ выполняется $(1; p)$-неравенство
Пуанкаре, если найдутся постоянные $C\geqslant 1$ и $\tau>0$ так,
что для каждого шара $B\subset X,$ произвольной ограниченной
непрерывной функции $u\colon X\rightarrow {\Bbb R}$ и любого её
верхнего градиента $\rho$ выполняется следующее неравенство:
$$\frac{1}{\mu(B)}\int\limits_{B}|u-u_B|d\mu(x)\leqslant C\cdot({\rm diam\,}B)\left(\frac{1}{\mu(\tau B)}
\int\limits_{\tau B}\rho^p d\mu(x)\right)^{1/p}\,,$$
где $u_B:=\frac{1}{\mu(B)}\int\limits_{B}u d\mu(x).$ Метрическое
пространство $(X, d, \mu)$ назовём {\it $\widetilde{Q}$-регулярным
по Альфорсу} при некотором $\widetilde{Q}\geqslant 1,$ если при
каждом $x_0\in X,$ некоторой постоянной $C\geqslant 1$ и
произвольного $R<{\rm diam}\,X,$
$\frac{1}{C}R^{\widetilde{Q}}\leqslant \mu(B(x_0, R))\leqslant
CR^{\widetilde{Q}}.$

\medskip
Для областей $D,$ $D^{\,\prime}\subset {\Bbb R}^n,$ $b_0\in D,$
$b_0^{\,\prime}\in D^{\,\prime}$ и произвольной измеримой по Лебегу
функции $Q(x): {\Bbb R}^n\rightarrow [0, \infty],$ такой, что
$Q(x)\equiv 0$ при $x\not\in D,$ обозначим символом
$\frak{F}_{\alpha, b_0, b_0^{\,\prime}, \varphi, Q}(D,
D^{\,\prime})$ семейство всех гомеоморфизмов $f:D\rightarrow
D^{\,\prime}$ класса $W_{loc}^{1, \varphi}$ в $D,$
$f(D)=D^{\,\prime},$ таких что $K_{I, \alpha}(x, f)\leqslant Q(x)$ и
$f(b_0)=b_0^{\,\prime}.$ Справедливо следующее утверждение.

\medskip
\begin{theorem}\label{th7A}{\sl\,
Пусть область $D$ регулярна, область $D^{\,\prime}$ ограничена,
имеет локально квазиконформную границу и, одновременно, является
пространством $n$-регулярным по Альфорсу относительно евклидовой
метрики и меры Лебега в ${\Bbb R}^n,$ в котором выполнено $(1;
\alpha)$-неравенство Пуанкаре, $n-1<\alpha\leqslant n.$ Предположим,
$Q\in L_{loc}^1({\Bbb R}^n),$ $Q(x)=0$ вне области $D,$ заданная
неубывающая функция $\varphi:[0,\infty)\rightarrow[0,\infty)$
удовлетворяет условию (\ref{eqOS3.0a}), и что для каждого $x_0\in
\overline{D}$ выполнено одно из следующих условий:

1) либо $Q\in FMO(\overline{D});$

2) либо в каждой точке $x_0\in \overline{D}$ при некотором
$\varepsilon_0=\varepsilon_0(x_0)>0$ и всех
$0<\varepsilon<\varepsilon_0$
$$
\int\limits_{\varepsilon}^{\varepsilon_0}
\frac{dt}{tq_{x_0}^{\,\frac{1}{n-1}}(t)}<\infty\,,\qquad
\int\limits_{0}^{\varepsilon_0}
\frac{dt}{tq_{x_0}^{\,\frac{1}{n-1}}(t)}=\infty\,,
$$
где
$q_{x_0}(r):=\frac{1}{\omega_{n-1}r^{n-1}}\int\limits_{|x-x_0|=r}Q(x)\,d{\mathcal
H}^{n-1}.$
Тогда каждый элемент $f\in \frak{F}_{\alpha, b_0, b_0^{\,\prime},
\varphi, Q}(D, D^{\,\prime})$ продолжается до непрерывного
отображения $\overline f\colon\overline D_P\rightarrow\overline
{D^{\,\prime}_P}$, при этом, семейство отображений
$\overline{\frak{F}_{b_0, b_0^{\,\prime}, \varphi, Q}(D,
D^{\,\prime})},$ состоящее из всех продолженных таким образом
отображений, является равностепенно непрерывным, а значит, и
нормальным  в $\overline{D}_P$.}
\end{theorem}

\medskip
{\bf 2. Вспомогательные сведения.} Дальнейшее изложение существенно
опираются на аппарат нак называемых нижних $Q$-гомеоморфизмов (см.
\cite[глава~9]{MRSY}). Говорят, что некоторое свойство $P$ выполнено
для {\it почти всех поверхностей} области $D,$ если оно имеет место
для всех поверхностей, лежащих в $D,$ кроме, быть может, некоторого
их подсемейства, модуль которого равен нулю. Будем говорить, что
измеримая по Лебегу функция $\rho:{\Bbb
R}^n\rightarrow\overline{{\Bbb R}^+}$ {\it обобщённо допустима
относительно $p$-модуля} для семейства $\Gamma$ $k$-мерных
поверхностей $S$ в ${\Bbb R}^n,$ сокр. $\rho\in{\rm ext}_p\,{\rm
adm}\,\Gamma,$ если соотношение (\ref{eq8.2.6}) выполнено для почти
всех поверхностей $S$ семейства $\Gamma.$ Следующий класс
отображений представляет собой обобщение квазиконформных отображений
в смысле кольцевого определения по Герингу (\cite{Ge$_3$}) и
отдельно исследуется (см., напр., \cite[глава~9]{MRSY}). Пусть $D$ и
$D^{\,\prime}$ -- заданные области в ${\Bbb R}^n,$ $n\geqslant 2,$
$x_0\in\overline{D}\setminus\{\infty\}$ и $Q:{\Bbb
R}^n\rightarrow(0,\infty)$ -- измеримая по Лебегу функция. Будем
говорить, что $f:D\rightarrow D^{\,\prime}$ -- {\it нижнее
$Q$-отображение в точке $x_0$ относительно $p$-модуля,} как только
\begin{equation}\label{eq1A}
M_p(f(\Sigma_{\varepsilon}))\geqslant \inf\limits_{\rho\in{\rm
ext}_p\,{\rm adm}\,\Sigma_{\varepsilon}}\int\limits_{D\cap A(x_0,
\varepsilon, r_0)}\frac{\rho^p(x)}{Q(x)}\,dm(x)
\end{equation}
для каждого кольца
\begin{equation}\label{eq1B}
A(x_0, \varepsilon, r_0)=\{x\in {\Bbb R}^n:
\varepsilon<|x-x_0|<r_0\}\,,
\end{equation}
$r_0\in(0,d_0),$ $d_0=\sup\limits_{x\in D}|x-x_0|,$
где $\Sigma_{\varepsilon}$ обозначает семейство всех пересечений
сфер $S(x_0, r)$ с областью $D,$ $r\in (\varepsilon, r_0).$ Если
$p=n,$ то будем, что $f$ -- нижнее $Q$-отображение в точке $x_0.$
Будем говорить, что $f$ нижнее $Q$-отображение относительно
$p$-модуля в $A\subset \overline{D},$ если соотношение (\ref{eq1A})
имеет место для каждого $x_0\in A.$

\medskip
Для отображения $f:D\,\rightarrow\,{\Bbb R}^n,$ множества $E\subset
D$ и $y\,\in\,{\Bbb R}^n,$  определим {\it функцию кратности $N(y,
f, E)$} как число прообразов точки $y$ во множестве $E,$ т.е.
\begin{equation}\label{eq1.7A}
N(y, f, E)\,=\,{\rm card}\,\left\{x\in E: f(x)=y\right\}\,,\quad
%
N(f, E)\,=\,\sup\limits_{y\in{\Bbb R}^n}\,N(y, f, E)\,.
\end{equation}

\medskip
Доказательство следующей леммы аналогично доказательству
\cite[теорема~5]{KRSS} и потому опускается.

\medskip
\begin{lemma}{}\label{thOS4.1} {\sl Пусть $D$ -- область в ${\Bbb R}^n,$
$n\geqslant 2,$ $\varphi:(0,\infty)\rightarrow (0,\infty)$ --
неубывающая функция, удовлетворяющая условию
\begin{equation}\label{eqOS3.0a}
\int\limits_{1}^{\infty}\left(\frac{t}{\varphi(t)}\right)^
{\frac{1}{n-2}}dt<\infty\,.
\end{equation}
Если $p>n-1,$ то каждое открытое дискретное отображение
$f:D\rightarrow {\Bbb R}^n$ с конечным искажением класса
$W^{1,\varphi}_{loc}$ такое, что $N(f, D)<\infty,$ является нижним
$Q$-отображением относительно $p$-модуля в каждой точке
$x_0\in\overline{D}$ при
$$Q(x)=N(f, D)\cdot K^{\frac{p-n+1}{n-1}}_{I, \alpha}(x, f),$$
$\alpha:=\frac{p}{p-n+1},$ где внутренняя дилатация $K_{I,\alpha}(x,
f)$ отображения $f$ в точке $x$ порядка $\alpha$ определена
соотношением (\ref{eq0.1.1A}), а кратность $N(f, D)$ определена
вторым соотношением в (\ref{eq1.7A}).}
\end{lemma}

\medskip
Справедливо следующее фундаментальное утверждение, аналог которого
для гомеоморфизмов и частного случая $p=n$ доказан в
\cite[лемма~3]{KR}.

\medskip
\begin{lemma}\label{lem1}
{\sl\, Пусть $n\geqslant 2,$ $p>n-1,$ область $D\subset {\Bbb R}^n$
регулярна, а $D^{\,\prime}\subset {\Bbb R}^n$ ограничена и имеет
локально квазиконформную границу, являющуюся сильно достижимой
относительно $\alpha$-модуля, $\alpha:=\frac{p}{p-n+1}.$ Пусть также
отображение $f:D\rightarrow D^{\,\prime},$ $D^{\,\prime}=f(D),$
является нижним $Q$-отображением в каждой точке $x_0\in \partial D$
относительно $p$-модуля, кроме того, $f$ является открытым,
дискретным и замкнутым. Тогда $f$ продолжается до непрерывного
отображения $f:\overline{D_P}\rightarrow \overline{D_P^{\,\prime}},$
$f(\overline{D_P})=\overline{D_P^{\,\prime}},$ если выполнено
следующее условие. Для каждой точки $x_0\in \partial D$ найдётся
$\varepsilon_0=\varepsilon_0(x_0)>0$ и измеримая по Лебегу функция
$\psi(t):(0, \varepsilon_0)\rightarrow [0,\infty]$ со следующим
свойством: для любого $\varepsilon\in(0, \varepsilon_0)$ выполнено
условие
$$ I(\varepsilon,
\varepsilon_0):=\int\limits_{\varepsilon}^{\varepsilon_0}\psi(t)dt <
\infty\,,\quad I(\varepsilon, \varepsilon_0)\rightarrow
\infty\quad\text{при}\quad\varepsilon\rightarrow 0\,,
$$
и, кроме того, при  $\varepsilon\rightarrow 0$
\begin{equation} \label{eq3.7.2}
\int\limits_{A(x_0, \varepsilon, \varepsilon_0)}
Q(x)\cdot\psi^{\,\alpha}(|x-x_0|)\,dm(x) = o(I^{\alpha}(\varepsilon,
\varepsilon_0))\,,\end{equation}
где, как обычно, сферическое кольцо $A(x_0, \varepsilon,
\varepsilon_0)$ определено (\ref{eq1B}).}
\end{lemma}

\medskip
{\bf 3. О равностепенной непрерывности в областях с $(1;
p)$-неравенством Пуанкаре, регулярных по Альфорсу.} Следующее
утверждение доказано в \cite[предложение~3]{KR} в случае $p=n$ как
для внутренних, так и граничных точек $x_0$ области $D$ (см. также
\cite[лемма~3.8]{KSS}, где изучен случай произвольного $p>n-1$ и
внутренних точек заданной области).  Для граничных точек области $D$
и произвольного $p>n-1$ доказательство данного утверждения
аналогично \cite[предложение~3]{KR}.

\medskip
\begin{lemma}\label{th4}
{\sl\, Пусть $x_0\in  \overline{D},$ $p>n-1,$ и пусть ограниченный
гомеоморфизм $f:D\rightarrow {\Bbb R}^n$ является нижним
$Q$-отображением  в области $D\subset{\Bbb R}^n$ относительно
$p$-модуля, $Q\in L_{loc}^{\frac{n-1}{p-n+1}}({\Bbb R}^n),$ $Q(x)=0$
вне области $D.$ Тогда для каждых $0<r_1<r_2<d_0:=\sup\limits_{x\in
D}|x-x_0|$
\begin{equation*}M_{\alpha}(f(\Gamma(S_1, S_2, D)))\leqslant \int\limits_{A(x_0,
r_1, r_2)}Q^{\frac{n-1}{p-n+1}}(x)
\eta^{\alpha}(|x-x_0|)dm(x)\,,\alpha:=\frac{p}{p-n+1}\,,
\end{equation*}
где $A(x_0, r_1, r_2)=\{x\in {\Bbb R}^n: r_1<|x-x_0|<r_2\},$
$S_i=S(x_0, r_i),$ $i=1,2,$ и $\eta: (r_1, r_2)\rightarrow
[0,\infty]$ -- произвольная измеримая по Лебегу функция такая, что
\begin{equation}\label{eq28*}
\int\limits_{r_1}^{r_2}\eta(r)dr=1\,.
\end{equation}
 }
\end{lemma}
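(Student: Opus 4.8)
The plan is to reproduce, with the exponents adjusted, the argument used for $p=n$ in \cite{KR}. Put $\beta:=\frac{n-1}{p-n+1}$, so that $\alpha=\beta+1$, $\alpha-1=\beta$ and $1-\alpha=-\beta$; write $A:=A(x_0,r_1,r_2)$, let $\Sigma$ be the family of the surfaces $S(x_0,r)\cap D$, $r\in(r_1,r_2)$ (this is the family $\Sigma_{r_1}$ of Section~2 with $r_0:=r_2$), and set $\Gamma:=\Gamma(S_1,S_2,D)$, $\Gamma^{*}:=f(\Gamma)$, $\Sigma^{*}:=f(\Sigma)$. We may assume $\int\limits_{A}Q^{\beta}(x)\,\eta^{\alpha}(|x-x_0|)\,dm(x)<\infty$, since otherwise there is nothing to prove. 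As $A$ is bounded and $Q\in L^{\beta}_{loc}$, Fubini's theorem shows that $\sigma(r):=\int\limits_{S(x_0,r)\cap D}Q^{\beta}(x)\,d{\mathcal H}^{n-1}$ is finite for a.e.\ $r\in(r_1,r_2)$ and that $\int\limits_{A}Q^{\beta}(x)\,\eta^{\alpha}(|x-x_0|)\,dm(x)=\int\limits_{r_1}^{r_2}\eta^{\alpha}(r)\,\sigma(r)\,dr$. The first step is to pass from $\Gamma^{*}$ to $\Sigma^{*}$: every $\gamma\in\Gamma$ meets $S(x_0,r)\cap D$ for each $r\in(r_1,r_2)$, because $t\mapsto|\gamma(t)-x_0|$ is continuous and takes all values between $r_1$ and $r_2$, while each point of $\gamma$ with $|\gamma(t)-x_0|\in(r_1,r_2)$ is interior to $\gamma$ and hence lies in $D$; consequently every curve of $\Gamma^{*}=f(\Gamma)$ meets the surface $f(S(x_0,r)\cap D)\in\Sigma^{*}$ for each such $r$. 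By the duality between the connecting curve family and the separating surface family of a ring, applied to the foliation $\{f(S(x_0,r)\cap D)\}_{r\in(r_1,r_2)}$ of $f(A\cap D)$ (here one uses the modulus calculus for families of surfaces, cf.\ \cite{MRSY}), one obtains $M_{\alpha}(\Gamma^{*})\leqslant\left(M_p(\Sigma^{*})\right)^{1-\alpha}$; since $1-\alpha<0$, combining this with the defining inequality (\ref{eq1A}) of a ring $Q$-homeomorphism (taken with $\varepsilon:=r_1$ and $r_0:=r_2$) gives
$$M_{\alpha}(\Gamma^{*})\ \leqslant\ \left(\,\inf\limits_{\rho\in{\rm ext}_p\,{\rm adm}\,\Sigma}\ \int\limits_{D\cap A}\frac{\rho^{p}(x)}{Q(x)}\,dm(x)\right)^{1-\alpha}\,.$$

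It remains to bound the infimum from below and to match the outcome with the right-hand side. Fix $\rho\in{\rm ext}_p\,{\rm adm}\,\Sigma$; then $\int\limits_{S(x_0,r)\cap D}\rho^{n-1}\,d{\mathcal H}^{n-1}\geqslant1$ for a.e.\ $r\in(r_1,r_2)$, since a $p$-exceptional subfamily of $\Sigma$ corresponds to a set of radii of linear measure zero. Writing $\rho^{n-1}=\left(\rho^{p}/Q\right)^{(n-1)/p}Q^{(n-1)/p}$ and applying H\"older's inequality on $S(x_0,r)\cap D$ with the exponents $\frac{p}{n-1}$ and $\alpha=\frac{p}{p-n+1}$, we get $1\leqslant\left(\int\limits_{S(x_0,r)\cap D}\frac{\rho^{p}}{Q}\,d{\mathcal H}^{n-1}\right)^{(n-1)/p}\sigma(r)^{1/\alpha}$, that is, $\int\limits_{S(x_0,r)\cap D}\frac{\rho^{p}}{Q}\,d{\mathcal H}^{n-1}\geqslant\sigma(r)^{-1/\beta}$; integrating in $r$ (Fubini), $\int\limits_{D\cap A}\frac{\rho^{p}(x)}{Q(x)}\,dm(x)\geqslant\int\limits_{r_1}^{r_2}\sigma(r)^{-1/\beta}\,dr$. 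On the other hand, from $\int\limits_{r_1}^{r_2}\eta(r)\,dr=1$, the splitting $\eta=\left(\eta^{\alpha}\sigma\right)^{1/\alpha}\sigma^{-1/\alpha}$ and H\"older's inequality with the exponents $\alpha$ and $\frac{\alpha}{\alpha-1}=\frac{\alpha}{\beta}$, we obtain $1\leqslant\left(\int\limits_{r_1}^{r_2}\eta^{\alpha}\sigma\,dr\right)^{1/\alpha}\left(\int\limits_{r_1}^{r_2}\sigma^{-1/\beta}\,dr\right)^{\beta/\alpha}$, hence $\int\limits_{r_1}^{r_2}\sigma(r)^{-1/\beta}\,dr\geqslant\left(\int\limits_{r_1}^{r_2}\eta^{\alpha}\sigma\,dr\right)^{-1/\beta}$. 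Chaining the three displayed inequalities and using $1-\alpha=-\beta$ together with the monotonicity of $t\mapsto t^{-\beta}$,
$$M_{\alpha}(\Gamma^{*})\ \leqslant\ \left(\int\limits_{r_1}^{r_2}\sigma(r)^{-1/\beta}\,dr\right)^{-\beta}\ \leqslant\ \int\limits_{r_1}^{r_2}\eta^{\alpha}(r)\,\sigma(r)\,dr\ =\ \int\limits_{A}Q^{\beta}(x)\,\eta^{\alpha}(|x-x_0|)\,dm(x)\,,$$
which is precisely the assertion of the lemma.

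The step I expect to be the main obstacle is the very first one, the inequality $M_{\alpha}(\Gamma^{*})\leqslant\left(M_p(\Sigma^{*})\right)^{1-\alpha}$, which converts the surface form (\ref{eq1A}) of the ring $Q$-homeomorphism property into an estimate for the connecting curve family. Since $f$ is merely a homeomorphism --- no differentiability of $f$ or of $f^{-1}$ is available --- this cannot be obtained by a change of variables or a coarea identity; it must be established inside the modulus theory for families of surfaces, by verifying that the images $f(S(x_0,r)\cap D)$, $r\in(r_1,r_2)$, form an admissible foliation separating $f(S_1)$ from $f(S_2)$ in $f(A\cap D)$ and by controlling the $\alpha$-modulus of the curves that cross all of them through a negative power of the $p$-modulus of that foliation; the fact that $M_{\alpha}(\Gamma)=\left(M_p(\Sigma)\right)^{1-\alpha}$ holds with equality for a spherical ring makes the general inequality plausible, and in essence this step is the passage from the surface version (\ref{eq1A}) of a ring $Q$-homeomorphism to its curve-family version. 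Everything else --- the two applications of H\"older's inequality (the second being, in effect, the statement that $\eta(|x-x_0|)$ is admissible for $\Gamma(S_1,S_2,D)$) and the bookkeeping of the exponents, in particular $\alpha=\beta+1$ and $(\alpha-1)/\beta=1$ --- is routine and proceeds exactly as for $p=n$ in \cite{KR}.
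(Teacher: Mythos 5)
Your argument is correct and is essentially the proof that the paper itself omits, referring instead to \cite{KR} (case $p=n$) and \cite{KSS}: passing from the connecting curve family to the family of dashed spheres, invoking the defining inequality (\ref{eq1A}) with $\varepsilon=r_1,$ $r_0=r_2,$ and then the two H\"older estimates with $\beta=\tfrac{n-1}{p-n+1}$ is exactly the scheme of that standard proof, and your exponent bookkeeping checks out. The step you flag as the main obstacle is not actually a gap but a known theorem rather than something to be re-derived: after the rescaling $\rho\mapsto\rho^{n-1}$ of the admissibility condition, $M_p(f(\Sigma))$ is the usual $\tfrac{p}{n-1}$-modulus (with normalization $\int_S\rho\,d\mathcal{H}^{n-1}\geqslant 1$) of a subfamily of the sets separating $f(S_1)$ from $f(S_2)$ in $f(D),$ and Ziemer's equality between the $\alpha$-modulus of connecting curves and the $\tfrac{\alpha}{\alpha-1}=\tfrac{p}{n-1}$-modulus of \emph{all} separating sets (valid for every exponent in $(1,\infty)$), combined with monotonicity of the modulus under enlargement of the family, yields exactly $M_{\alpha}(f(\Gamma))\leqslant\left(M_p(f(\Sigma))\right)^{1-\alpha}$ --- just as your spherical-ring computation suggests --- while your auxiliary claim that $\rho\in{\rm ext}_p\,{\rm adm}\,\Sigma$ is admissible on $S(x_0,r)\cap D$ for a.e.\ $r$ is likewise a standard lemma of this theory (cf.\ \cite{MRSY}, \cite{KRSS}), so the proof only needs these attributions to be complete.
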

Справедливо следующее утверждение (см.~\cite[предложение~4.7]{AS}).

\medskip
\begin{proposition}\label{pr2}
{\sl Пусть $X$ --- $\beta$-регулярное по Альфорсу метрическое
пространство с мерой, в котором выполняется $(1; p)$-неравенство
Пуанкаре, $\beta-1< p\leqslant \beta.$ Тогда для произвольных
континуумов $E$ и $F,$ содержащихся в шаре $B(x_0, R),$ и некоторой
постоянной $C>0$ выполняется неравенство
$M_p(\Gamma(E, F, X))\geqslant \frac{1}{C}\cdot\frac{\min\{{\rm
diam}\,E, {\rm diam}\,F\}}{R^{1+p-\beta}}.$ }
\end{proposition}

\medskip
Имеет место следующее утверждение, обобщающее
\cite[лемма~3.1]{Sev$_1$} в случае не локально связных границ.

\medskip
\begin{lemma}\label{lem3}
{\sl\, Пусть $p\in [n, n+1/(n-2)),$ $\alpha:=\frac{p}{p-n+1},$
область $D$ регулярна, область $D^{\,\prime}$ ограничена, имеет
локально квазиконформную границу и, одновременно, является
пространством $n$-регулярным по Альфорсу относительно евклидовой
метрики и меры Лебега в ${\Bbb R}^n,$ в котором выполнено $(1;
\alpha)$-неравенство Пуанкаре. Пусть также $P_0$ -- некоторый
простой конец в $E_D,$ а $\sigma_m,$ $m=1,2,\ldots,$ --
соответствующая ему цепь разрезов, лежащих на сферах с центром в
некоторой точке $x_0\in
\partial D$ и радиусов $r_m\rightarrow 0,$ $m\rightarrow\infty.$
Пусть $D_m$ -- соответствующая $P_0$ последовательность
ассоциированных областей, а $C_m$ -- произвольная последовательность
континуумов, принадлежащих $D_m.$

Предположим, $Q\in L_{loc}^{\frac{n-1}{p-n+1}}({\Bbb R}^n),$
$Q(x)=0$ вне области $D,$  $f:D\rightarrow D^{\,\prime}$ -- нижний
$Q$-гомеоморфизм относительно $p$-модуля, $f(D)=D^{\,\prime},$ такой
что $b_0^{\,\prime}=f(b_0)$ для некоторых $b_0\in D$ и
$b_0^{\,\prime}\in D^{\,\prime}.$ Пусть также найдётся
$\varepsilon_0=\varepsilon(x_0)>0,$
такое, что при некотором $0<p^{\,\prime}<\alpha$ выполнено условие
\begin{equation}\label{eq5***}
\int\limits_{A(x_0, \varepsilon, \varepsilon_0)}
Q^{\frac{n-1}{p-n+1}}(x)\cdot\psi^{\,\alpha}(|x-x_0|)\,dm(x)
\leqslant K\cdot I^{p^{\,\prime}}(\varepsilon, \varepsilon_0)\,,
\end{equation}
где сферическое кольцо $A(x_0, \varepsilon, \varepsilon_0)$
определено как в (\ref{eq1B}), а $\psi$ -- некоторая неотрицательная
измеримая функция, такая, что при всех $\varepsilon\in(0,
\varepsilon_0)$
\begin{equation}\label{eq7C}
I(\varepsilon,
\varepsilon_0):=\int\limits_{\varepsilon}^{\varepsilon_0}\psi(t)\,dt
< \infty\,,
\end{equation}
при этом, $I(\varepsilon, \varepsilon_0)\rightarrow \infty$ при
$\varepsilon\rightarrow 0.$

Тогда найдётся число
$\widetilde{\varepsilon_0}=\widetilde{\varepsilon_0}(x_0)\in (0,
\varepsilon_0)$  и $M_0\in {\Bbb N}$ такие, что
$$ {\rm diam\,}f(C_m)\leqslant C\cdot R^{1+\alpha-n}\cdot K\cdot
I^{p^{\,\prime}-\alpha}(r_m, \varepsilon_0)\cdot\Delta(r_m)\,,\quad
m\geqslant M_1\,,
$$
где ${\rm diam\,}f(C_m)$ -- евклидов диаметр множества $f(C_m),$
\begin{equation}\label{eq1.3}
\Delta(\sigma)=\left(
1+\frac{\int\limits_{\widetilde{\varepsilon_0}}^{\varepsilon_0}\psi(t)\,dt}
{\int\limits_{\sigma}^{\widetilde{\varepsilon_0}}\psi(t)\,dt}\right)^{\alpha}\,,\end{equation}
$\delta = \frac{1}{2}\cdot d\left(b_0^{\,\prime},
\partial D^{\,\prime}\right),$ $d$ -- евклидово расстояние
между множествами, $R$ -- радиус шара, содержащего область
$D^{\,\prime},$ а $C$ -- постоянная из предложения \ref{pr2}.}
\end{lemma}

\medskip
\begin{proof} Прежде всего, по определению регулярной области, $D$ может быть отображена
квазиконформно на область $G$ с локально квазиконформной границей
посредством отображения $g:D\rightarrow G.$ Заметим, что $G\ne {\Bbb
R}^n$ ввиду теоремы Лиувилля для квазиконформных отображений (см.
\cite[следствие~2.12, гл.~III]{Ri}). С другой стороны, ввиду
\cite[теорема~3]{KR} $g$ продолжается до гомеоморфизма
$g:\overline{D}_P\rightarrow \overline{G}_P,$ причём ввиду
\cite[теорема~4.1]{Na} существует взаимно однозначное соответствие
между точками границы $G$ и простыми концами в области $G.$ В таком
случае, также существует взаимно однозначное соответствие между
точками границы $G$ и простыми концами в области $D,$ а, значит,
таких простых концов не менее двух.

Пусть теперь $P_1\in E_D$ -- простой конец, не совпадающий с $P_0,$
где $P_0$ -- фиксированный простой конец из условия леммы.
Предположим, $G_m,$ $m=1,2,\ldots,$ -- последовательность областей,
соответствующая простому концу $P_1$ и $x_m\in G$ -- произвольная
последовательность точек, такая что $x_m\rightarrow P_1$ при
$m\rightarrow\infty.$ Можно считать, что $x_m\in G_m$ для всякого
$m\in {\Bbb N}.$ Тогда, так как $f$ имеет непрерывное продолжение на
$\overline{D}_P$ ввиду леммы \ref{lem1}, то $f(x_m)\rightarrow
f(P_1)$ при $m\rightarrow\infty.$ Заметим, что при всех $m\geqslant
m_0$ и некотором $m_0\in {\Bbb N}$
\begin{equation}\label{eq3}
|f(b_0)-f(x_m)|=|b_0^{\,\prime}-f(x_m)|\geqslant
|b_0^{\,\prime}-f(P_1)|-|f(x_m)-f(P_1)|\geqslant \frac{1}{2}\cdot
d(b_0^{\,\prime}, \partial D^{\,\prime}):=\delta\,,
\end{equation}
где $d(b_0^{\,\prime}, \partial D^{\,\prime})$ обозначает евклидово
расстояние между $b_0^{\,\prime}$ и $\partial D^{\,\prime}.$
Построим последовательность континуумов $K_m,$ $m=1,2,\ldots,$
следующим образом. Соединим точку $x_1$ с точкой $b_0$ произвольной
кривой в $D,$ которую мы обозначим через $K_1.$ Далее, соединим
точки $x_2$ и $x_1$ кривой $K_1^{\prime},$ лежащей в $G_1.$
Объединив кривые $K_1$ и $K_1^{\prime},$ получим кривую $K_2,$
соединяющую точки $b_0$ и $x_2.$ И так далее. Пусть на некотором
шаге мы имеем кривую $K_m,$ соединяющую точки $x_m$ и $b_0.$
Соединим точки $x_{m+1}$ и $x_m$ кривой $K_m^{\,\prime},$ лежащей в
$G_m.$ Объединяя между собой кривые $K_m$ и $K_m^{\,\prime},$
получим кривую $K_{m+1}.$ И так далее.

Пусть $C_m,$ $m=1,2,\ldots,$ -- последовательность континуумов в
областях $D_m,$ заданная по условию. Покажем, что найдётся номер
$m_1\in {\Bbb N},$ такой что
\begin{equation}\label{eq4}
D_m\cap K_m=\varnothing\quad\forall\quad m\geqslant m_1\,.
\end{equation}
Предположим, что (\ref{eq4}) не имеет места, тогда найдутся
возрастающая последовательность номеров $m_k\rightarrow\infty,$
$k\rightarrow\infty,$ и последовательность точек $\xi_k\in
K_{m_k}\cap D_{m_k},$ $m=1,2,\ldots,\,.$ Тогда, с одной стороны,
$\xi_k \rightarrow P_0$ при $k\rightarrow\infty.$

\medskip
Рассмотрим следующую процедуру. Заметим, что возможны два случая:
либо все элементы $\xi_k$ при $k=1,2,\ldots$ принадлежат множеству
$D\setminus G_1,$ либо найдётся номер $k_1$ такой, что $\xi_{k_1}\in
G_1.$ Далее, рассмотрим последовательность $\xi_k,$ $k>k_1.$
Заметим, что возможны два случая: либо $\xi_k$ при $k>k_1$
принадлежат множеству $D\setminus G_2,$ либо найдётся номер
$k_2>k_1$ такой, что $\xi_{k_2}\in G_2.$ И так далее. Предположим,
элемент $\xi_{k_{l-1}}\in G_{l-1}$ построен. Заметим, что возможны
два случая: либо $\xi_k$ при $k>k_{l-1}$ принадлежат множеству
$D\setminus G_l,$ либо найдётся номер $k_l>k_{l-1}$ такой, что
$\xi_{k_l}\in G_l.$ И так далее. Эта процедура может быть как
конечной (оборваться на каком-то $l\in {\Bbb N}$), так и
бесконечной, в зависимости от чего мы имеем две ситуации:

1) либо найдутся номера $n_0\in {\Bbb N}$ и $l_0\in {\Bbb N}$ такие,
что $\xi_k\in D\setminus G_{n_0}$ при всех $k>l_0;$

2) либо для каждого $l\in {\Bbb N}$ найдётся элемент $\xi_{k_l}$
такой, что $\xi_{k_l}\in G_l,$ причём последовательность $k_l$
является возрастающей по $l\in {\Bbb N}.$

\medskip
Рассмотрим каждый из этих случаев и покажем, что в обоих из них мы
приходим к противоречию. Пусть имеет место ситуация 1), тогда
заметим, что все элементы последовательности $\xi_k$ принадлежат
$K_{n_0},$ откуда вытекает существование подпоследовательности
$\xi_{k_r},$ $r=1,2,\ldots,$ сходящейся при $r\rightarrow\infty$ к
некоторой точке $\xi_0\in D.$ Однако, с другой стороны $\xi_k\in
D_{m_k}$ и, значит, $\xi_0\in \bigcap\limits_{m=1}^{\infty}
\overline{D_m}\subset
\partial D$ (см. \cite[предложение~1]{KR} по этому поводу).
Полученное противоречие говорит о том, что случай 1) невозможен.
Пусть имеет место случай 2), тогда одновременно $\xi_k\rightarrow
P_0$ и $\xi_k\rightarrow P_1$ при $k\rightarrow\infty.$ В силу
непрерывного продолжения $f$ на $\overline{D}_P$ отсюда имеем, что
$f(\xi_k)\rightarrow f(P_0)$ и $f(\xi_k)\rightarrow f(P_1)$ при
$k\rightarrow\infty,$ откуда $f(P_0)=f(P_1),$ что противоречит
гомеоморфности продолжения $f$ в $\overline{D}_P.$ Полученное
противоречие указывает на справедливость соотношения (\ref{eq4}).

Положим теперь $\widetilde{\varepsilon_0}=\min\{\varepsilon_0,
r_{m_1+1}\},$ и пусть $M_0$ -- натуральное число, такое что
$r_m<\widetilde{\varepsilon_0}$ при всех $m\geqslant M_0.$
Рассмотрим измеримую функцию
$$\eta_{m}(t)= \left\{
\begin{array}{rr}
\psi(t)/I(r_m, \widetilde{\varepsilon_0}), &   t\in (r_m, \widetilde{\varepsilon_0}),\\
0,  &  t\not\in (r_m, \widetilde{\varepsilon_0})\,,
\end{array}
\right.$$
%
%
где, как и прежде, величина $I(a, b)$ определяется соотношением
$I(a, b)=\int\limits_a^b\psi(t)\,dt.$ Заметим, что функция
$\eta_m(t)$ удовлетворяет соотношению вида (\ref{eq28*}), где вместо
$r_1$ и $r_2$ участвуют $r_m$ и $\widetilde{\varepsilon_0},$
соответственно. Заметим, что ввиду соотношения (\ref{eq4}), а также
по определению разрезов $\sigma_m\subset r_m,$ $\Gamma\left(C_m,
K_m, D\right)>\Gamma(S(x_0, r_m), S(x_0, \widetilde{\varepsilon_0}),
D)$ и значит, $f(\Gamma\left(C_m, K_m, D\right))>f(\Gamma(S(x_0,
r_m), S(x_0, \widetilde{\varepsilon_0}),D)),$ откуда
$$M_{\alpha}(f(\Gamma\left(C_m, K_m, D\right)))\leqslant M_{\alpha}(f(\Gamma(S(x_0,
r_m), S(x_0, \widetilde{\varepsilon_0}), D))$$
(см. \cite[теорема~6.4]{Va}). В таком случае, согласно лемме
\ref{th4}, мы получим, что
$$M_{\alpha}\left(\Gamma\left(f(C_m), f(K_m),
D^{\,\prime}\right)\right)=$$
\begin{equation}\label{eq37***}
=M_{\alpha}\left(f\left(\Gamma\left(C_m, K_m,
D\right)\right)\right)\leqslant M_{\alpha}(f(\Gamma(S(x_0, r_m),
S(x_0, \widetilde{\varepsilon_0}), D )))\leqslant
\end{equation}
$$\leqslant\,\frac{K\cdot I^{p^{\,\prime}}(r_m, \varepsilon_0)}{I^{\alpha}(r_m,
\widetilde{\varepsilon_0})}=K\cdot I^{p^{\,\prime}-\alpha}(r_m,
\varepsilon_0)\cdot\Delta(r_m)\,,\quad m\geqslant M_0\,,$$
где $\Delta(r_m)$ определяется из соотношения (\ref{eq1.3}) при
$\sigma=r_m.$
С другой стороны, по предложению \ref{pr2}
\begin{equation}\label{eq11}
\min\{{\rm diam\,}f(C_m), {\rm diam\,}f(K_m)\}\leqslant
CR^{1+\alpha-n}M_{\alpha}\left(\Gamma\left(f(C_m), f(K_m),
D^{\,\prime}\right)\right)\,,
\end{equation}
где $R$ -- радиус шара, содержащего область $D^{\,\prime}.$

Поскольку из (\ref{eq37***}) вытекает, что
$M_{\alpha}\left(\Gamma\left(f(C_m), f(K_m),
D^{\,\prime}\right)\right)\rightarrow 0$ при $m\rightarrow\infty,$ а
${\rm diam\,}f(K_m)\geqslant \delta$ ввиду (\ref{eq3}), то из
(\ref{eq11}) следует, что при некотором $M_1\geqslant M_0$ и всех
$m\geqslant M_1$
\begin{equation}\label{eq12B}
{\rm diam\,}f(C_m)\leqslant C\cdot R^{1+\alpha-n}\cdot
M_{\alpha}\left(\Gamma\left(f(C_m), f(K_m),
D^{\,\prime}\right)\right)\,.
\end{equation}
Тогда из (\ref{eq37***}) и (\ref{eq12B}) вытекает, что
$$
{\rm diam\,}f(C_m)\leqslant C\cdot R^{1+\alpha-n}\cdot K\cdot
I^{p^{\,\prime}-\alpha}(r_m, \varepsilon_0)\cdot\Delta(r_m)\,,\quad
m\geqslant M_1\,.
$$
Лемма доказана.~$\Box$
\end{proof}

\medskip
Для заданных областей $D,$ $D^{\,\prime}\subset {\Bbb R}^n,$ $n\ge
2,$ $n-1<p\leqslant n,$ измеримой по Лебегу функции $Q(x):{\Bbb
R}^n\rightarrow [0, \infty],$ $Q(x)\equiv 0$ при $x\not\in D,$
$b_0\in D,$ $b_0^{\,\prime}\in D^{\,\prime},$ обозначим через
$\frak{G}_{p, b_0, b_0^{\,\prime}, Q}\left(D, D^{\,\prime}\right)$
семейство всех нижних $Q$-гомеоморфизмов $f:D\rightarrow
D^{\,\prime}$ относительно $p$-модуля, таких что
$f(D)=D^{\,\prime},$ $b_0^{\,\prime}=f(b_0).$ В наиболее общей
ситуации основное утверждение настоящего раздела может быть
сформулировано следующим образом.

\medskip
\begin{lemma}\label{lem3A}
{\sl\, Пусть $p\in [n, n+1/(n-2)),$ $\alpha:=\frac{p}{p-n+1},$
область $D$ регулярна, область $D^{\,\prime}$ ограничена, имеет
локально квазиконформную границу и, одновременно, является
пространством $n$-регулярным по Альфорсу относительно евклидовой
метрики и меры Лебега в ${\Bbb R}^n,$ в котором выполнено $(1;
\alpha)$-неравенство Пуанкаре.

Предположим, что $Q\in L_{loc}^{\frac{n-1}{p-n+1}}({\Bbb R}^n),$ и
что для каждого $x_0\in \overline{D}$ найдётся
$\varepsilon_0=\varepsilon(x_0)>0,$
такое, что при некотором $0<p^{\,\prime}<\alpha$ выполнено условие
(\ref{eq5***}), где сферическое кольцо $A(x_0, \varepsilon,
\varepsilon_0)$ определено как в (\ref{eq1B}), а $\psi$ -- некоторая
неотрицательная измеримая функция, такая, что при всех
$\varepsilon\in(0, \varepsilon_0)$ выполнено условие (\ref{eq7C}),
при этом, $I(\varepsilon, \varepsilon_0)\rightarrow \infty$ при
$\varepsilon\rightarrow 0.$

Тогда каждое $f\in\frak{G}_{p, b_0, b_0^{\,\prime}, Q}\left(D,
D^{\,\prime}\right)$ продолжается до гомеоморфизма
$f:\overline{D}_P\rightarrow \overline{D^{\,\prime}}_P,$ при этом
семейство таким образом продолженных отображений является
равностепенно непрерывным в $\overline{D}_P.$ }
\end{lemma}

\begin{proof} Каждое отображение $f$ имеет непрерывное продолжение на
$\overline{D}_P$ в силу леммы \ref{lem1}. Равностепенная
непрерывность семейства $\frak{G}_{p, b_0, b_0^{\,\prime},
Q}\left(D, D^{\,\prime}\right)$ во внутренних точках области $D$
следует, например, из комбинации леммы \ref{th4} и
\cite[лемма~2.4]{GSS}.

Осталось показать равностепенную непрерывность  $\frak{G}_{p, b_0,
b_0^{\,\prime}, Q}\left(D, D^{\,\prime}\right)$ на $E_D.$

Предположим противное, а именно, что семейство отображений
$\frak{G}_{p, b_0, b_0^{\,\prime}, Q}\left(D, D^{\,\prime}\right)$
не является равностепенно непрерывным в некоторой точке $P_0\in
E_D.$ Тогда найдутся число $a>0,$ последовательность $P_k\in
\overline{D}_P,$ $k=1,2,\ldots$ и элементы $f_k\in\frak{G}_{b_0,
b_0^{\,\prime}, Q}\left(D, D^{\,\prime}\right)$ такие, что $d(P_k,
P_0)<1/k$ и
\begin{equation}\label{eq6}
|f_k(P_k)-f_k(P_0)|\geqslant a\quad\forall\quad k=1,2,\ldots,\,.
\end{equation}
Ввиду возможности непрерывного продолжения каждого $f_k$ на границу
$D$ в терминах простых концов, для всякого $k\in {\Bbb N}$ найдётся
элемент $x_k\in D$ такой, что $d(x_k, P_k)<1/k$ и
$|f_k(x_k)-f_k(P_k)|<1/k.$ Тогда из (\ref{eq6}) вытекает, что
\begin{equation}\label{eq7D}
|f_k(x_k)-f_k(P_0)|\geqslant a/2\quad\forall\quad k=1,2,\ldots,\,.
\end{equation}
Аналогично, в силу непрерывного продолжения отображения $f_k$ в
$\overline{D_P}$ найдётся последовательность $x_k^{\,\prime}\in D,$
$x_k^{\,\prime}\rightarrow P_0$ при $k\rightarrow \infty$ такая, что
$|f_k(x_k^{\,\prime})-f_k(P_0)|<1/k$ при $k=1,2,\ldots\,.$ Тогда из
(\ref{eq7D}) вытекает, что
\begin{equation}\label{eq8B}
|f_k(x_k)-f_k(x_k^{\,\prime})|\geqslant a/4\quad\forall\quad
k=1,2,\ldots\,,\,.
\end{equation}
Пусть $\sigma_m,$ $m=1,2,\ldots,$ -- соответствующая $P_0$ цепь
разрезов, лежащих на сферах с центром в некоторой точке $x_0\in
\partial D$ и радиусов $r_m\rightarrow 0,$ $m\rightarrow\infty.$
Пусть $D_m$ -- соответствующая $P_0$ последовательность
ассоциированных областей. Не ограничивая общности рассуждений, можно
считать, что $x_k$ и $x_k^{\,\prime}$ принадлежат области $D_k.$
Соединим точки $x_k$ и $x_k^{\,\prime}$ кривой $C_k$ лежащей в
$D_k.$ Тогда по лемме \ref{lem3} мы получим, что ${\rm
diam}\,f(C_k)\rightarrow 0$ при $k\rightarrow \infty,$ что
противоречит неравенству (\ref{eq8B}). Полученное противоречие
указывает на то, что исходное предположение об отсутствии
равностепенной непрерывности семейства $\frak{G}_{p, b_0,
b_0^{\,\prime}, Q}\left(D, D^{\,\prime}\right)$ было
неверным.~$\Box$
\end{proof}

\medskip
Из леммы \ref{lem3A} на основе рассуждений из \cite[лемма~3.1 и
детали доказательства теоремы~4.2]{GSS$_1$} (см. также
\cite[лемма~2.3.1]{Sev$_7$}), получаем следующее утверждение.

\medskip
\begin{theorem}\label{th8}{\sl\,
Пусть $p\in [n, n+1/(n-2)),$ $\alpha:=\frac{p}{p-n+1},$ область $D$
регулярна, область $D^{\,\prime}$ ограничена, имеет локально
квазиконформную границу и, одновременно, является пространством
$n$-регулярным по Альфорсу относительно евклидовой метрики и меры
Лебега в ${\Bbb R}^n,$ в котором выполнено $(1; \alpha)$-неравенство
Пуанкаре.

Предположим, что $Q\in L_{loc}^{\frac{n-1}{p-n+1}}({\Bbb R}^n)$ и
выполнено одно из следующих условий:

1) либо в каждой точке $x_0\in \overline{D}$ при некотором
$\varepsilon_0=\varepsilon_0(x_0)>0$ и всех
$0<\varepsilon<\varepsilon_0$
$$
\int\limits_{\varepsilon}^{\varepsilon_0}
\frac{dt}{t^{\frac{n-1}{\alpha-1}}\widetilde{q}_{x_0}^{\,\frac{1}{\alpha-1}}(t)}<\infty\,,\qquad
\int\limits_{0}^{\varepsilon_0}
\frac{dt}{t^{\frac{n-1}{\alpha-1}}\widetilde{q}_{x_0}^{\,\frac{1}{\alpha-1}}(t)}=\infty\,,
$$
где
$\widetilde{q}_{x_0}(r):=\frac{1}{\omega_{n-1}r^{n-1}}\int\limits_{|x-x_0|=r}Q^{\frac{n-1}{p-n+1}}(x)\,d{\mathcal
H}^{n-1};$

2) либо $Q^{\frac{n-1}{p-n+1}}\in FMO(\overline{D}).$
Тогда каждое $f\in\frak{G}_{p, b_0, b_0^{\,\prime}, Q}\left(D,
D^{\,\prime}\right)$ продолжается до гомеоморфизма
$f:\overline{D}_P\rightarrow \overline{D^{\,\prime}}_P,$ при этом
семейство таким образом продолженных отображений является
равностепенно непрерывным в $\overline{D}_P.$}
\end{theorem}

\medskip
\begin{proof}
В силу леммы \ref{thOS4.1} каждое отображение $\frak{F}_{\alpha,
b_0, b_0^{\,\prime}, \varphi, Q}(D, D^{\,\prime})$ является нижним
$B$-отображением относительно $p$-модуля при $B(x)=
Q^{\frac{p-n+1}{n-1}}(x, f),$ где $p$ находится из условия
$\alpha=\frac{p}{p-n+1}.$ Однако, относительно $B(x)$ выполнены
условия 1) и 2) теоремы \ref{th8}, поскольку
$B^{\frac{n-1}{p-n+1}}(x)=Q(x),$ где $Q$ удовлетворяет соотношениям
1)-2) теоремы \ref{th7A}. Оставшаяся часть утверждения вытекает из
теоремы \ref{th8}.~$\Box$
\end{proof}

\medskip
КОНТАКТНАЯ ИНФОРМАЦИЯ

\medskip
\noindent{{\bf Евгений Александрович Севостьянов} \\
Житомирский государственный университет им.\ И.~Франко\\
ул. Большая Бердичевская, 40 \\
г.~Житомир, Украина, 10 008 \\ тел. +38 066 959 50 34 (моб.),
e-mail: esevostyanov2009@mail.ru}

\end{document}